\newtheorem{theorem}{Theorem}
\newtheorem{definition}[theorem]{Definition}
\newtheorem{proposition}[theorem]{Proposition}
\title{On the harmonious chromatic number of graphs}
\author{Gabriela Araujo-Pardo\footnotemark[1] \and Juan Jos{\' e} Montellano-Ballesteros\footnotemark[1] \and Mika Olsen\footnotemark[2] \and Christian Rubio-Montiel\footnotemark[3]}
\begin{document}
\maketitle

\def\thefootnote{\fnsymbol{footnote}}
\footnotetext[1]{Instituto de Matem{\'a}ticas, Universidad Nacional Aut{\'o}noma de M{\' e}xico, Mexico City, Mexico. {\tt [garaujo|juancho]@math.unam.mx}.}
\footnotetext[2]{Departamento de Matem{\' a}ticas Aplicadas y Sistemas, UAM-Cuajimalpa, Mexico City, Mexico. {\tt
olsen@correo.cua.uam.mx}.}
\footnotetext[3]{Divisi{\' o}n de Matem{\' a}ticas e Ingenier{\' i}a, FES Acatl{\' a}n, Universidad Nacional Aut{\'o}noma de M{\' e}xico, Naucalpan, Mexico. {\tt christian.rubio@acatlan.unam.mx}.}

\begin{abstract} 
The harmonious chromatic number of a graph $G$ is the minimum number of colors that can be assigned to the vertices of $G$ in a proper way such that any two distinct edges have different color pairs. This paper gives various results on harmonious chromatic number related to homomorphisms, incidence graphs of finite linear systems, and some circulant graphs.
\end{abstract}

\textbf{Keywords.} Harmonious homomorphism, Levi graph, circulant graph, diameter two.

Mathematics Subject Classification: 05C15, 05B25,05C60.

%%%%%%%%%%%%%%%%%%%%%%%%%%%%%%%%%%%%%%%%%%%%%%%%%%%%%%%%%%%%%%%%%%

\section{Introduction}

A \emph{harmonious $k$-coloring} of a finite graph $G$ is a proper vertex $k$-coloring such that every pair of colors appears on at most one edge.  The \emph{harmonious chromatic number} $h(G)$ of $G$ is the minimum number $k$ such that $G$ has a harmonious $k$-coloring. % harmonious chromatic number has applications in codes, usamos aspectso geometricos y algebraicos de diseos, 

The harmonious chromatic number introduced by Miller and Pritikin \cite{MR1139582} is a proper variation of the parameter defined independently by Hopcroft and Krishnamoorthy \cite{MR711339} and by Frank, Harary, and Plantholt \cite{MR683991}.

From the definition, a graph $G$ has the size $m \leq \binom{h(G)}{2}$, which in turn gives the easy lower bound
\begin{equation}\label{eq1}
\sqrt{2m+\frac{1}{4}}+\frac{1}{2} \leq h(G). 
\end{equation}
In \cite{MR989131} and \cite{ORA} the authors %determine $h(C_n)$ as well as $h(C_r\cup C_s)$ for $r+s=n$, 
give families of graphs that show that the lower bound (\ref{eq1}) is tight.

It is easy to see that a proper $n$-coloring of a graph $G$ of order $n$ is harmonious, however,  the harmonious chromatic number has an additional difficulty that makes impossible a greedy coloring to complete a partial coloration even if extra colors are allowed: if we have an uncolored vertex adjacent to two different vertices with the same color, there is no harmonious extension of the involved partial coloring.

Therefore, if $G$ has order $n$ and diameter at most two, then $h(G)=n$, see Edwards \cite{MR1477743}, for instance, complete graphs, complete bipartite graphs, line graphs of complete graphs, Erd{\H o}s-R{\' e}nyi orthogonal polarity graphs, graphs with a universal vertex (stars, wheels, connected trivially perfect graphs, etc.) have the harmonious number equal to $n$. In the survey \cite{MR1477743}, we can find a list of interesting results about the harmonious chromatic number and the achromatic number, a related parameter, also see Chapters 12 and 13 of \cite{MR2450569}. 

The purpose of this paper is to study the harmonious chromatic number% this parameter
. The paper is organized as follows. In Section \ref{section2} are given results arising from homomorphisms; in Section \ref{section3} we study the harmonious chromatic number of graphs with small diameter related to incidence graphs of linear systems; and finally, in Section \ref{section4} we analyze the harmonious chromatic number of some circulant graphs.

\section{On homomorphisms}\label{section2}

Let $G$, $H$ be graphs. A mapping $\varsigma\colon V(G) \rightarrow V(H)$ is a \emph{homomorphism} from $G$ to $H$ provided that $\varsigma (x) \varsigma (y) \in E(H)$ whenever $xy\in E(G)$. Then the graph $\varsigma (G) = (\varsigma (V(G)),\varsigma_E(E(G)))$, where
\[\varsigma_E(E(G)) = \{\varsigma (x)\varsigma (y) \colon xy \in E(G)\},\]
is the \emph{homomorphic image} of $G$ in $H$. An \emph{elementary homomorphism} of the graph $G$ is a homomorphism that identifies nonadjacent vertices $u,v\in V (G)$, i.e., maps both $u$, $v$ to a vertex out of $V (G) \setminus \{u, v\}$, and fixes all vertices of $V (G) \setminus \{u, v\}$; in this paper we suppose that the elementary homomorphism identifying $u$ and $v$ maps both $u$, $v$ to the vertex $u$. An \emph{elementary harmonious homomorphism} of the graph $G$ is an elementary homomorphism of $G$ identifying vertices $u$, $v$ with $d_G(u, v) \geq 3$.

\begin{proposition}\label{propo1}
If $\epsilon$ is an elementary harmonious homomorphism of a graph $G$, then $h(G) \leq h(\epsilon (G))$.
\end{proposition}
\begin{proof}
Suppose that $\epsilon$ identifies vertices $u$, $v$ with $d_G(u, v) \geq 3$, $h(\epsilon (G)) = k$, and let $\varsigma'$ be a harmonious $k$-coloring of $\epsilon (G)$. Consider a vertex $k$-coloring $\varsigma$ of $G$ determined by $\varsigma'$ as follows:
\[\varsigma(x)= \begin{cases}
\begin{array}{ll}
\varsigma'(x) & \textrm{ if }x\in V(G)\setminus\{u,v\}\\
\varsigma'(u) & \textrm{ if }x\in\{u,v\}
\end{array}\end{cases}\] 
Clearly, $\varsigma$ is harmonious, and so $h(G) \leq k = h(\epsilon (G))$.
\end{proof}

A multiple application of elementary harmonious homomorphisms leads from the graph $G$ to a graph $H$ of diameter two. The composition of those homomorphisms is a homomorphism $\varsigma$ from $G$ to $H$. If $V(H)=\{v_1, v_2, \dots , v_k\}$, then $\varsigma$ corresponds to a partition $\{V_1, V_2, \dots , V_k\}$ of $V (G)$, where, for each $i\in \{1,2,\dots k\}$, the set $V_i = \varsigma^{-1}(v_i)$ is formed by vertices that are (in $G$) pairwise at a distance at least three. Moreover, if $i,j\in \{1, 2, \dots, k\}$, $i\not = j$, there is at most one edge in $E(G)$ joining a vertex of $V_i$ to a vertex of $V_j$. To see it, suppose that there are edges $xa, yb in E(G)$ with $x, y \in V_i$, $x\not= y$, and $a, b \in V_j$, $a\not = b$. Without loss of generality we may suppose that, during the process of determining $\varsigma$, the identification of $a$ and $b$ took place before the identification of $x$ and $y$. Then, however, in the graph, that is created by identifying $a$ and $b$, the distance between $x$ and y is two. Since this distance cannot increase subsequently, the identification of $x$ and $y$ cannot be carried out, a contradiction. The above reasoning shows that the homomorphism $\varsigma \colon V (G) \rightarrow V (H)$ is a harmonious coloring of $G$; we say $\varsigma$ is a \emph{harmonious homomorphism} from $G$ to $H$ (or simply a harmonious homomorphism of the graph $G$). The following result is a consequence of Proposition \ref{propo1}.

\begin{theorem}\label{teo2}
Let $u$ and $v$ be vertices in a graph $G$ such that $d(u,v)\geq 3$ and let $\epsilon$ be the elementary harmonious homomorphism of $G$ that identifies the vertices $u$ and $v$. Then
\[h(\epsilon(G)) = h(G)\]
if and only if there exists an $h(G)$-coloring of $G$ assigning to $u$ and $v$ the same color.
\end{theorem}
\begin{proof}
First, suppose that $h(\epsilon (G)) = h(G) = k$, and let $\varsigma'$ be a harmonious $k$-coloring of $\epsilon (G)$. Then the coloring $\varsigma$, which is determined by $\varsigma'$ as in the proof of Proposition \ref{propo1}, is a harmonious $h(G)$-coloring of $G$ with $\varsigma(u) = \varsigma (v)$.
Now, suppose that $\varsigma(u) = \varsigma(v)$ for a harmonious $h(G)$-coloring $\varsigma$ of $G$. Then the restriction of $\varsigma$ to $V (\epsilon (G))$ is a harmonious $h(G)$-coloring of $\varsigma (G)$. Thus $h(\varsigma (G)) \leq h(G)$. By Proposition \ref{propo1}, $h(G) \leq h(\varsigma (G))$, and the result follows.
\end{proof}

\begin{theorem}\label{teo3}
If $\epsilon$ is an elementary harmonious homomorphism identifying nonadjacent vertices $u$ and $v$ of $G$ such that $d(u,v)\geq 3$, then 
\[h(G) \leq h(\epsilon (G)) \leq h(G) + \min\{\deg(u),\deg(v)\} + 1.\]
Therefore, $h(G) \leq h(\epsilon (G)) \leq h(G) + \Delta(G) + 1.$
\end{theorem}
\begin{proof}
By Proposition \ref{propo1}, $h(G)\leq h(\epsilon (G))$.  Suppose that $h(G) = k$ and a harmonious $k$-coloring of $G$ is given with the colors $1, 2, \dots , k$ and $\deg(u)=\min\{\deg(u),\deg(v)\}$ with $N(u)=\{u_1,u_2,\dots ,u_{\deg(u)}\}$. Define a coloring $\varsigma'$ of $\epsilon (G)$ by
\[\varsigma'(x)= \begin{cases}
\begin{array}{ll}
\varsigma(x) & \textrm{ if }x\in V(G)\setminus N(u)\setminus\{u,v\}\\
k+1 & \textrm{ if }x\in\{u,v\}\\
k+i+1 & \textrm{ if }x=u_{i}
\end{array}\end{cases}.\]

Since $\varsigma'$ is a harmonious coloring of $\epsilon (G)$ using at most $k + \deg(u) + 1$ colors, the result follows.
\end{proof}

\begin{theorem}\label{teo4}
Let $G$ be a graph with vertices $u$ and $v$ in $G$ such that $d_G(u,v)\geq 3$. Then
\[h(G) \leq h(G + uv) \leq h(G)+1.\]
\end{theorem}
\begin{proof}
Clearly, $h(G) \leq h(G + uv)$. To prove the other inequality, consider a harmonious $h(G)$-coloring $\varsigma$ of $G$. If $\varsigma$ is a harmonious coloring of $G + uv$, then $h(G + uv) = h(G)$. Otherwise recoloring one of vertices $u$, $v$ with a new color yields a harmonious $(h(G) + 1)$-coloring of $G + uv$. Indeed, if $\varsigma (u) = \varsigma (v)$, then recoloring $u$ does the job. On the other hand, under the assumption $\varsigma (u)\not = \varsigma (v)$ there is $w \in \{u, v\}$ such that with $\{w, x\} = \{u, v\}$ the vertex $w$ does not have in $G$ a neighbor colored with $\varsigma (x)$; in such a case recolor $w$.
\end{proof}

Beginning with a graph $G$, we can always perform a sequence of elementary harmonious homomorphisms until arriving at some graph of diameter two $H$ with $k$ vertices. As we saw,  the graph $H$ obtained in this manner corresponds to a harmonious $k$-coloring of $G$. Consequently, we have the following.

\begin{definition}
The largest order of a graph of diameter two that is a harmonious homomorphic image of a graph $G$ is the harmonious achromatic number of $G$, denoted by $ha(G)$.
\end{definition}

We have seen that for every graph $G$ of order $n$, \[h(G) \leq ha(G) \leq n.\]

A \emph{complete coloring} of $G$ is a proper $k$-coloring such that every pair of colors appears on at least one edge. The \emph{achromatic number} $\alpha(G)$ of $G$ is the maximum number $k$ such that $G$ has a complete $k$-coloring.

In the case of the chromatic and the achromatic number there is an interpolation result \cite{MR0272662}, namely,  a graph $G$ has a complete proper $k$-coloring if and only if $\chi(G) \leq k \leq \alpha(G)$. By giving a simple example, we show that this interpolation result fails in the case of the harmonious chromatic number and the harmonious achromatic number. 

\begin{theorem}\label{teo6}
Let $r \geq 4$ be a positive integer. Then there exists a graph $G_r$ such that $h(G_r) = r+1$, $ha(G_r) = 2r-1$, and $h(\epsilon (G_r)) \in \{r+1,2r-1\}$ whenever $\epsilon$ is a harmonious homomorphism of the graph $G_r$.
\end{theorem}
\begin{proof}
Let $G_r$ be the vertex-disjoint union of graphs $L = K_{1,r-1}$ and $R = K_{1,r-1}$ with $V(L) = \{u_0,u_1,\dots ,u_{r-1}$ and $V(R) = \{v_0,v_1,\dots ,v_{r-1}$, where $u_0$, $v_0$ are vertices of degree $r-1$. It is easy to see that $h(G_r)=r+1$.

Now let $f$ be a harmonious homomorphism of the graph $G_r$, and let $\epsilon$ be the first elementary harmonious homomorphism applied when determining $f$. The homomorphism $\epsilon$ identifies vertices $u$, $v$ with $d_{G_r} (u, v)\geq 3$, say $u \in V (L)$ and $v \in V (R)$.

\begin{enumerate}
\item If $u=u_0$ and $v=v_0$,then $\epsilon (G_r)=f(G_r)=K_{1,2r-2}$ and $h(f(G_r))= 2r-1$.

\item If $u = u_0$ and $v\not = v_0$, then without loss of generality we may suppose that $v = v_{r-1}$. Any other elementary harmonious homomorphism applied in the process of establishing $f$ has to identify a leaf of $L$ with a leaf $v_j$ of $R$, $j\not = r-1$. Without loss of generality let $u_i$ be identified with $v_i$ for $i\in\{ 1,2,\dots ,r-2\}$. The resulting graph $f(G_r)$ of diameter two has the leaf $u_{r-1}$, $r-2$ vertices $u_i$ of degree two, $i = \{1,2,\dots,r-2$, the vertex $u_0$ of degree $r$ and its neighbor $v_0$ of degree $r-1$; therefore, $h(f(G_r)) = r + 1$.

\item If $u\not = u_0$ and $v = v_0$, we proceed similarly as in the case 2 to conclude that $h(f(G_r)) = r + 1$.

\item If $u\not = u_0$ and $v\not = v_0$, then $d_{\epsilon (G_r)}(u_0,v_0) = 2$. Therefore, at most one of the vertices $u_0$, $v_0$ is involved in elementary harmonious homomorphisms leading to the resulting graph $H$ of diameter two. Consequently, $H$ is either (isomorphic to) the graph $f(G_r)$ from the case $c\in \{2,3\}$ or equal to $K_{2,r-1}$, and so we have $h(H) = r + 1$.
\end{enumerate}
By inspection of the cases 1-4 we see that $ha(G_r) = 2r-1$.
\end{proof}

%The complement $\overline{G}$ of a graph $G$ is the graph with vertex set $V(G)$ and two of its vertices $u$ and $v$ are adjacent if and only if $u$ is not adjacent to $v$ in $G$.

\begin{theorem}
If $\epsilon$ is an elementary harmonious homomorphism of a graph $G$, then \[h(\overline{G})-1 \leq h(\overline{\epsilon(G)}) \leq h(\overline{G}).\]
\end{theorem}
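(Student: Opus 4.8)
The plan is to prove the two inequalities separately, as they have rather different flavours. First I would record the shape of $\overline{\epsilon(G)}$ inside $\overline G$. Writing $w$ for the vertex of $\epsilon(G)$ obtained by identifying $u$ and $v$, and using $d_G(u,v)\ge 3$ (so $u\not\sim_G v$ and $u,v$ have no common neighbour in $G$), a direct check gives $N_{\overline{\epsilon(G)}}(w)=N_{\overline G}(u)\cap N_{\overline G}(v)$, while the remaining adjacencies of $\overline{\epsilon(G)}$ are exactly those of $\overline G$ on $V(G)\setminus\{u,v\}$. Thus $\overline{\epsilon(G)}$ is isomorphic to the subgraph of $\overline G$ on $V(G)\setminus\{v\}$ whose edges are all edges of $\overline G$ among $V(G)\setminus\{u,v\}$ together with the edges from $u$ to $N_{\overline G}(u)\cap N_{\overline G}(v)$; informally, $\overline{\epsilon(G)}$ is $\overline G$ with $v$ deleted and with the edges from $u$ to its \emph{private} complement-neighbours $N_{\overline G}(u)\setminus N_{\overline G}(v)$ removed. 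In particular $\overline{\epsilon(G)}$ is (isomorphic to) a subgraph of $\overline G$, and since restricting a harmonious colouring to a subgraph keeps it proper and harmonious, $h(\overline{\epsilon(G)})\le h(\overline G)$; equivalently one may simply colour $w$ with the colour of $u$ in a harmonious $h(\overline G)$-colouring of $\overline G$ and keep all other colours, noting that every colour pair occurring on an edge of $\overline{\epsilon(G)}$ already occurs on an edge of $\overline G$.

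For the lower bound $h(\overline G)\le h(\overline{\epsilon(G)})+1$ I would start from a harmonious $k$-colouring $\varsigma'$ of $\overline{\epsilon(G)}$ with $k=h(\overline{\epsilon(G)})$, set $c_0=\varsigma'(w)$, and try to extend $\varsigma'$ to a harmonious colouring of $\overline G$ using just one new colour $k+1$: keep $\varsigma'$ on $V(G)\setminus\{u,v\}$ and ``re-split'' $w$ into the adjacent pair $u,v$, giving one of them the colour $c_0$ and the other the fresh colour $k+1$. The common neighbourhood causes no difficulty, since harmoniousness of $\varsigma'$ at $w$ forces $\varsigma'$ to be injective on $N_{\overline G}(u)\cap N_{\overline G}(v)$ and to avoid $c_0$ there; so the edges from $u$ and from $v$ into the common neighbourhood inherit distinct colour pairs. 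I would choose which of $u,v$ receives $c_0$ according to which private neighbourhood avoids the colour $c_0$, then check: (i) properness at $u$ and at $v$; and (ii) that no two edges at $u$, no two edges at $v$, and no edge at $u$ or $v$ together with an edge inside $\overline G-u-v$, carry the same colour pair. If a clash cannot be avoided by this simple choice, the backup is to recolour one offending private neighbour, still staying within $k+1$ colours.

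The main obstacle is exactly step (ii) of the lower bound. Because $w$ ``sees'' only the common neighbourhood $N_{\overline G}(u)\cap N_{\overline G}(v)$ in $\overline{\epsilon(G)}$, the colouring $\varsigma'$ carries no information at all about how colours sit on the private neighbourhoods $N_{\overline G}(u)\setminus N_{\overline G}(v)$ and $N_{\overline G}(v)\setminus N_{\overline G}(u)$, and when $w$ is expanded back into $u$ and $v$ those edges reappear and can force repeated colour pairs among edges incident with $u$ (or with $v$). Pinning down the choice of colours for $u$ and $v$ — together with the single possible recolouring — so that it is provably collision-free, presumably via a short case analysis on where $c_0$ and the colours of the private neighbours lie, is where essentially all the work of the proof lies; once that is settled, the two bounds combine to give the stated inequalities.
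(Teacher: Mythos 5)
Your upper-bound argument is correct, and it is a genuinely different (and shorter) route than the paper's. The structural description is right: mapping the merged vertex $w$ to $u$ embeds $\overline{\epsilon(G)}$ as a spanning subgraph of $\overline{G}-v$, since $N_{\overline{\epsilon(G)}}(w)=N_{\overline{G}}(u)\cap N_{\overline{G}}(v)$ and all other adjacencies agree with those of $\overline{G}$; combined with the (valid) monotonicity of $h$ under subgraphs this gives $h(\overline{\epsilon(G)})\le h(\overline{G})$ at once. The paper instead takes an optimal partition of $V(\overline{G})$ into independent sets, removes $u$ from its class, and checks by hand that the resulting partition still induces a harmonious colouring of $\overline{\epsilon(G)}$; your subgraph observation subsumes that case analysis.

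The lower-bound half, however, is a genuine gap, and by your own account: you set up the ``re-split $w$ into $u$ and $v$, spend one new colour, possibly recolour one private neighbour'' scheme and then state that making step (ii) collision-free is where essentially all the work lies. That step cannot be completed, because the inequality $h(\overline{G})-1\le h(\overline{\epsilon(G)})$ fails in general. For example, let $G=P_4$ and let $\epsilon$ identify its two endpoints (which are at distance $3$): then $\epsilon(G)=K_3$, so $h(\overline{\epsilon(G)})=h(\overline{K_3})=1$, while $\overline{G}\cong P_4$ has $h(\overline{G})=3$. Similarly, $G=2K_2$ with $\epsilon$ identifying one endpoint of each edge gives $h(\overline{\epsilon(G)})=2$, while $\overline{G}\cong C_4$ has $h(C_4)=4$. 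The obstruction is exactly the one you isolated: a harmonious colouring of $\overline{\epsilon(G)}$ constrains only the common complement-neighbourhood $N_{\overline{G}}(u)\cap N_{\overline{G}}(v)$ and carries no information about the colours on the private neighbourhoods $N_{\overline{G}}(u)\setminus N_{\overline{G}}(v)$ and $N_{\overline{G}}(v)\setminus N_{\overline{G}}(u)$, and one extra colour plus a single recolouring cannot in general restore properness and harmoniousness on the edges that reappear there. Note that the paper's own argument for this half asserts that no neighbour of $u$ in $\overline{G}$ carries the colour of $w$, which is guaranteed only for neighbours of $w$ in $\overline{\epsilon(G)}$ — i.e., it overlooks precisely these private neighbours. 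So your diagnosis of where the difficulty sits is accurate, but the difficulty is fatal rather than technical: the stated lower bound requires additional hypotheses, not a cleverer extension argument.
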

\begin{proof}
Suppose that the homomorphism $\epsilon$ identifies vertices $u,v\in V (G)$ (with $d_G(u, v) \geq 3$). 

To prove that $h(\overline{G})-1\leq h(\overline{\epsilon (G)})$ let $h(\overline{\epsilon(G)}) = k$, and let a coloring $\varsigma\colon V (\overline{\epsilon(G)}) \rightarrow \{1, 2,\dots , k\}$ be harmonious. Consider the proper vertex coloring $\varsigma' \colon V(\overline{G})\rightarrow \{1,2,\dots ,k+1\}$ defined by
\[\varsigma'(x)= \begin{cases}
\begin{array}{ll}
\varsigma(x) & \textrm{ if }x\in V(\overline{G})\setminus \{v\}\\
k+1 & \textrm{ if }x=v
\end{array}\end{cases}.\]

Evidently, $\varsigma'$ is harmonious. Therefore, $h(G) \leq k + 1 = h(\varsigma (G)) + 1$, and the required inequality follows.

Now, we show that $h(\overline{\epsilon (G)}) \leq h(\overline{G})$. With $h(\overline{G}) = l$ let $\mathcal{P} =\{V_1,V_2,\dots,V_l\}$ be the partition of $V (\overline{G})$ into (independent) color classes of a harmonious $l$-coloring of $\overline{G}$. Since $uv \not\in E(G)$, we have $uv \in E(\overline{G})$, and so without loss
of generality we may suppose that $u\in V_1$ and $v\in V_l$. Let $V'_i =V_i$ for $i\in \{1,2,\dots,l-1\}$, and $V'_l = V_l\setminus \{v\}$. Further, let $p = l$ if $V'_l\not = \emptyset$, and $p = l-1$ if $V'_l = \emptyset$. Then $\mathcal{P}' = \{V'_1,V'_2,\dots,V'_p\}$ is a partition of $V(\overline{\epsilon(G)})$. The sets $V'_2,V'_3,\dots ,V'_p$ do not contain $u$, hence they are independent in $\overline{\epsilon (G)}$. The set $V'_1$ is independent in $\overline{\epsilon (G)}$, too. First, distinct vertices of $V'_1\setminus \{u\}$ are nonadjacent in $\overline{\epsilon (G)}$, since they are nonadjacent in $\overline{G}$. Next, if vertices $u$ and $w \in V'_1\setminus \{u\}$ are adjacent in $\epsilon (G)$, they are nonadjacent in $\epsilon (G)$, which means that neither $uw$ nor $vw$ is an edge in $G$; therefore, vertices $u, w \in V_1$ are adjacent in $\overline{G}$, a contradiction. The partition $\mathcal{P}'$ represents a harmonious coloring of $\overline{\epsilon (G)}$. Indeed, it suffices to prove that for any $i \in \{2,3,\dots ,p\}$ there is at most one edge joining in $\overline{\epsilon (G)}$ a vertex of $V'_1$ to a vertex of $V'_i$. Assuming the opposite, there are vertices $w \in V'_1\setminus\{u\}$ and $x,y\in V'_i$, $x\not= y$, such that $ux, wy \in E(\overline{\epsilon (G)})$. Consequently, $\{ux, wy\} \cap E(\epsilon (G)) = \emptyset$, which implies that $ux, vx, wy \notin E(\overline{G})$. Then, however, $ux, wy \in E(\overline{G})$, $u, w \in V_1$ and $x,y \in V_i$, and we have obtained a contradiction with the fact that $\mathcal{P}$ represents a harmonious coloring of $\overline{G}$. Thus $h(\overline{\epsilon (G)}) \leq p \leq l = h(\overline{G})$.
\end{proof}

Since $ha(G)\leq n$ for any graph $G$ of order $n$, we deduce that \[ha(G)+ha(\overline{G})\leq 2n.\]
In Kundrík \cite{MR1206261} it was proved that 
\[n+1 \leq h(G)+h(\overline{G})\leq 2n.\]
Moreover, for any $n\geq 7$ and $r\in \{1,2,\dots,n\}$,  there exists a graph $G$ of order $n$ such that $h(G)+h(\overline{G})=n+r$.

%%%%%%%%%%%%%%%%%%%%%%%%%%%%%%%%%%%%%%%%%%%%%%%%%%%%%%%%%%%%%%%%%%

\section{On graphs of small diameter}\label{section3}

Consider a graph $H$ of order $n$ and diameter two such that deleting any edge increases the diameter; such graph is called a \emph{diameter-two-critical} graph.  Clearly, $G=H-e$ for any edge $e$ of $H$ is a graph of diameter three or more with $h(G)=n-1$. The following theorem states there is no constant $c$ such that, for any graph $G$ of order $n$ and diameter three, $n-h(G) \leq c$. We recall that the join of graphs $G,H$ is the graph $G+H$ with $V(G+H)$ equal to the vertex-disjoint union of $V(G), V (H)$, and $E(G+H) = E(G)\cup E(H) \cup \{uv \colon u \in V (G), v \in V (H)\}$.

\begin{theorem} \label{teo8}
Let $r, s\geq 3$ be positive integers. There exists a graph $G$ of order $n$ and diameter 3 such that $n=r(s+1)$ and $h(G)=r+s$.
In particular, if $r=s$, then $h(G)=2\sqrt{n}+\Theta(1)$.
\end{theorem}
\begin{proof}
Let $G_{r,s}$ be the graph obtained by adding $s$ leaves to each vertex of the complete graph $K_r$. A vertex in the complete subgraph $K_r$ has degree $r+s-1$ in $G_{r,s}$, thus $h(G_{r,s})\geq r+s$. So, the diameter of $G_{r,s}$ is three. Moreover, there exists a harmonious homomorphism from $G_{r,s}$ to the graph $K_r+\overline{K_s}$ of order $r + s$ and diameter two, hence $h(G_{r,s}) \leq r + s$.
\end{proof}

Now we analyze graphs arising from finite geometries.  A finite projective plane of order $q$ has $q^2+q+ 1$ points and $q^2+q+ 1$ lines satisfying the following axioms.
\begin{enumerate}
\item Any two distinct points determine a line.
\item Any two distinct lines determine a point.
\item There exists four points such that no line is incident to three or four of them.
\end{enumerate}

In particular, if $q$ is a power of a prime number there exists a projective plane that arises from the Galois field of order $q$,  we denote the field by $GF(q)=\{g_0=0,g_1=1,g_2,\dots,g_{q-1}\}$. It is called the \emph{algebraic projective plane}, and it is denoted by $PG(2,q)$.  For further information about the algebraic projective planes,  see \cite{MR554919}.  Next, we take a projective plane $PG(2,q)$ and we will label their points and lines. 

%Without loss of generality, 
Let $P$ be a point and let $L$ be a line incident to $P$.  We use $P$ and $L$ to label the elements of the projective plane. We call $P$ the infinity point and $L$ the infinity line. Since each line has $q+1$ points, let $\{P_{g_0},P_{g_1},\ldots,P_{g_{q-1}}\}$ be the set of points incident to $L$ that are different from $P$. Since each point is incident to $q+1$ lines, let $\{L_{g_0},L_{g_1},\ldots,L_{g_{q-1}}\}$ be the set of lines incident to $P$ that are different from $L$.  
Let $\{(g_i,g_0),(g_i,g_1),\ldots,(g_i,g_{q-1})\}$ be the set of points different from $P$ incident to $L_{g_i}$. The remaining lines are labeled as follows. The line $[a,b]$ is incident to all the points $(x,y)$ that satisfy $y=ax+b$ using the arithmetic of $GF(q)$, for all $a,b,x,y\in GF(q)$.

Now, we recall the definition of the incidence graph of a projective plane of order $q$, that is, a regular bipartite graph of degree $q+ 1$ and bipartition $(A,B)$ with $2(q^2+q+ 1)$ vertices: the set $A$ have $q^2+q+1$ vertices corresponding to the points and the set $B$ have $q^2+q+1$ vertices corresponding to the lines. Two vertices of the incidence graph are adjacent if the involved point and the involved line are incident to each other. That graph is of diameter three, girth six, and it is a $(q+ 1,6)$-cage which has the order equal to the well-known Moore bound. The incidence graph is also known as the Levi graph.

\begin{theorem}\label{teo9}
If $G$ is the incidence graph of the algebraic projective plane of odd order $q$, then $q^2+q+ 1\leq h(G) \leq q^2+q+ 2$.
\end{theorem}
\begin{proof}
In any harmonious coloring of $G$, different points receive different colors, because every two points share a line, then  $h(G)\geq q^2+q+ 1$.

We define the coloring in the points and lines as follows.  We assign $q^2+q+1$ distinct colors to $q^2+q+1$ points of $PG(2,q)$. Now, if $a\not= 0$, the line $[a,b]$ is colored with the color of the point $(a,b)$. Since $a^2\not= 0$ implies $b\not = a^2 + b$, $(a, b)$ is not incident to $[a, b]$. Then the partial coloring (defined at that moment) is proper. Moreover,  for $\alpha\not=0$, if the point $(\alpha,\beta)$ is incident to the line $[a,b]$, $\beta=a\alpha+b$,  then $(\alpha,\beta)\not =(a,b)$, and the point $(a,b)$ is not incident to the line $[\alpha,\beta]$, otherwise, $b=a\alpha+\beta=a\alpha+a\alpha+b$, i.e., $0=1+1$, a contradiction because $q$ is odd. 
Since for any pair of color classes $\{(a,b),[a,b]\}$ and $\{(m,n),[m,n]\}$ with $a\not=0$ and
$m\not= 0$ there is at most one edge joining them, the partial coloring is harmonious.

For $b\not=-1$, the line $[0,b]$ is colored with the color of the point $(0,b+1)$,  the line $[0,-1]$ is colored with the color of the point $P$, and the line $L$ is colored with the color of the point $(0,0)$. Clearly, the partial coloring is proper. If a point $(0,b+1)$ is incident to the line $[a,b+1]$ then the point $(a,b+1)$ is not incident to the line $[0,b]$ since $b+1\not=b$. Therefore, the partial coloring is harmonious.

For $a\not=0,-1$,  the line $L_a$ is colored with the color of the point $P_{a+1}$, the line $L_{-1}$ is colored with the color of the point $P_{1}$, and the line $L_{0}$ is colored with a new color.  Clearly, the coloring is proper. Now, the point $P_1$ is incident to the line $[1,b]$ and $(1,b)$ is not incident to the line $L_{-1}$.  If a point $P_{a+1}$ is incident to the line $[a+1,b]$, then the point $(a+1,b)$ is not incident to the line $L_a$ since $a+1\not=0$.  Finally, the point $P$ is incident to the line $L_a$, the point $P_{a+1}$ is not incident to the line $[0,-1]$, and the point $P_1$ is incident to the line $L$, and the point $(0,0)$ is not incident to the line $L_{-1}$. Therefore, the coloring is harmonious and $h(G)\leq q^2+q+ 2$.
\end{proof}

A linear space $\mathcal{S}$ is an incidence structure satisfying the following axioms:
\begin{enumerate}
\item Any two distinct points determine a line.
\item Any line is incident to at least two distinct points.
\item $\mathcal{S}$ contains at least two distinct lines.
\end{enumerate}
Consequently, using the same techniques as in Theorem \ref{teo9}, we can give bounds on the harmonious number of the incidence graph of a linear space. 

Examples of linear spaces are projective planes,  affine planes, and $2$-$(n,k)$-designs (see the definition below).  Clearly, the incidence graph of a linear space has diameter at most 4.

Now, we give a bound for the harmonious chromatic number of the incidence graph of a linear space related to the Erd{\H o}s-Faber-Lov{\' a}sz Conjecture, see \cite{MR4249208,MR0409246,MR602413}. 

A \emph{line coloring} of $\mathcal{S}$ is an assignment of $k$ colors to lines of $\mathcal{S}$. A line coloring of $\mathcal{S}$ is called \emph{proper} if any two intersecting lines have different colors. The \emph{chromatic index} $\chi'(\mathcal{S})$ of $\mathcal{S}$ is the smallest $k$ such that there exists a proper line coloring of $\mathcal{S}$ with $k$ colors. Erd{\H o}s, Faber and Lov{\' a}sz conjectured that the chromatic index of any finite linear space $\mathcal{S}$ cannot exceed the number of its points, so if $\mathcal{S}$ has $n$ points then\[\chi'(\mathcal{S})\leq n.\]

\begin{theorem} \label{teo10}
Let $\mathcal{S}$ be a linear space of $n$ points, then its incidence graph $G$ has the following bounds for $h(G)$,
\[n\leq h(G) \leq n+\chi'(\mathcal{S}).\]
Moreover, if the Erd{\H o}s-Faber-Lov{\' a}sz Conjecture is true for $\mathcal{S}$, then $h(G) \leq 2n$.
\end{theorem}
\begin{proof}
The incidence graph $G$ of a linear space with point set $\mathcal{P}$ has $h(G)\geq |\mathcal{P}|=n$ by axiom 1. 
To get the upper bound, we assign a coloring via a line coloring with $\chi'(\mathcal{S})$ colors and assign a different color to each vertex and the result follows because any two lines with the same color are at distance 3 or 4 in $G$.
\end{proof}

Let $n$, $b$, $k$ and $r$ be positive integers with $n>1$. Let $D=(P,B,I)$ be a triple consisting of a set $P$ of $n$ distinct objects, called points of $D$, a set $B$ of $b$ distinct objects, called blocks of $D$ ($P\cap B = \emptyset$), and an incidence relation $I$, a subset of $P\times B$. We say that $v$ is incident to $u$ if exactly one of the ordered pairs $(u,v)$ and $(v,u)$ is in $I$; then $v$ is incident to $u$ if and only if $u$ is incident to $v$.  $D$ is called a \emph{$2$-$(n, b, k, r)$ block design} (for short, a \emph{$2$-$(n, b, k, r)$ design}) if it satisfies the following axioms.
\begin{enumerate}
\item Each block of $D$ is incident to exactly $k$ distinct points of $D$.
\item Each point of $D$ is incident to exactly $r$ distinct blocks of $D$.
\item If $u$ and $v$ are distinct points of $D$, then there is exactly one block of $D$ incident to both $u$ and $v$.
\end{enumerate}
A $2$-$(n, b, k, r)$ design is called a balanced incomplete block design BIBD; it is called an $(n, k)$-design, too, since the parameters of a $2$-$(n, b, k, r)$ design are not all independent. The two equations connecting them are $nr=bk$ and $r(k-1)=n-1$. For a detailed introduction to block designs we refer the reader to \cite{MR1729456}.

A design is \emph{resolvable} if its blocks can be partitioned into $r$ sets so that $b/r$ blocks of each part are point-disjoint and each part is called a \emph{parallel class}.
Therefore,  Theorem \ref{teo10} implies that the incidence graph $G$ of a resolvable design $D$ satisfies $h(G)\leq n+\frac{n-1}{k-1}$.

When $k=2$, the $(n,k)$-design is isomorphic to the complete graph $K_n$ and it is resolvable if and only if $n$ is even. The following theorem improves the previous bounds.

\begin{theorem}
Let $G$ be the incidence graph of the complete graph $K_n$. Then 
\[\frac{3}{2}n\leq h(G) \leq \frac{5n+10}{3}n.\]
\end{theorem}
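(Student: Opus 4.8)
The plan is to translate a harmonious colouring of $G$ — the incidence graph of $K_n$, equivalently the graph obtained by subdividing each edge of $K_n$ once — into a constrained edge‑colouring of $K_n$. Since the $n$ points of $G$ are pairwise at distance $2$, they must receive $n$ distinct colours; identify these with the vertices $1,\dots,n$ of $K_n$, and let $\phi(ij)$ be the colour of the block $ij$. One checks that a proper colouring of $G$ is harmonious exactly when: (i) $\phi$ is a proper edge‑colouring of $K_n$ (two blocks through a common point get different colours, else the colour pair at that point repeats); (ii) $\phi(ij)\notin\{i,j\}$; and (iii) there is no \emph{reciprocal pair}, that is, no two distinct vertices $i,j$ with some edge at $i$ coloured $j$ and some edge at $j$ coloured $i$. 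Indeed, checking when two incidences of $G$ carry the same colour pair leaves exactly these possibilities. Hence $h(G)=n+t$, where $t$ is the least number of colours used by such a $\phi$ outside $\{1,\dots,n\}$, and both inequalities become statements about $t$.

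\emph{Lower bound.} A block $ij$ carrying a vertex‑colour $c$ uses the two distinct pairs $\{i,c\}$ and $\{j,c\}$ of elements of $\{1,\dots,n\}$, and all colour pairs occurring in $G$ are distinct; since there are only $\binom n2$ such pairs, at most $\lfloor\binom n2/2\rfloor$ blocks carry a vertex‑colour, so at least $\lceil\binom n2/2\rceil$ blocks carry a colour outside $\{1,\dots,n\}$. By (i) each such colour class is a matching of $K_n$, hence has at most $\lfloor n/2\rfloor$ blocks; since $t$ is an integer this forces $t\ge\lceil n/2\rceil$, whence $h(G)\ge n+\lceil n/2\rceil\ge\tfrac32 n$.

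\emph{Upper bound.} Partition $V(K_n)$ into three parts $X,Y,Z$ of as equal size as possible. Properly edge‑colour the complete graph on $Y$ with colours from $X$, the complete graph on $Z$ with colours from $Y$, and the complete graph on $X$ with colours from $Z$; each needs at most $\lceil n/3\rceil+\theta(1)$ colours, which the relevant part supplies up to a bounded deficit absorbed into $\theta(1)$, and $\phi(ij)\notin\{i,j\}$ holds by construction. Colour every edge joining two distinct parts with brand‑new colours, via a proper edge‑colouring of the complete tripartite graph between the parts — its maximum degree is at most $2\lceil n/3\rceil$, so $\tfrac23 n+\theta(1)$ colours suffice. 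Conditions (i) and (ii) are immediate. For (iii): an edge can receive the colour $b\in X$ only if it lies inside $Y$, so in the digraph on $\{1,\dots,n\}$ with an arc $a\to b$ whenever some edge at $a$ is coloured $b$, all arcs go from $Y$ to $X$, from $Z$ to $Y$, or from $X$ to $Z$; this pattern contains no directed $2$‑cycle, so $G$ has no reciprocal pair. Counting: $n$ point colours and $\tfrac23 n+\theta(1)$ new ones, so $h(G)\le\tfrac53 n+\theta(1)$.

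The substantive step is the reduction in the first paragraph — isolating the reciprocal‑pair obstruction and verifying that properness plus absence of reciprocal pairs is all that harmoniousness demands. After that the lower bound is a short counting argument, and the upper bound is the explicit construction, whose only real subtlety is the acyclicity of the three‑part usage digraph (which is why one uses three parts and not two). The case $n\not\equiv0\pmod3$ — unequal parts, and a Vizing‑type extra $O(1)$ for the complete multipartite graph — is routine and is exactly what the $\theta(1)$ absorbs.
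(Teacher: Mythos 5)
Your proposal is correct and follows essentially the same route as the paper: both exploit that the $n$ point-vertices must get distinct colors, bound from below by showing at most $\binom{n}{2}/2$ edges of $K_n$ can carry a point color while every other color class is a matching of size at most $\lfloor n/2\rfloor$ (your pair-counting replaces the paper's degree-sum computation, but yields the same estimate), and bound from above by the same three-part construction in which edges inside each part are properly edge-colored with the point colors of the next part cyclically and the remaining complete tripartite graph gets new colors. Your explicit isolation of the ``reciprocal pair'' condition is a slightly cleaner packaging of the harmoniousness check, but the substance matches the paper's argument.
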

\begin{proof}
For the lower bound,  let $G$ be the incidence graph of the complete graph $K_n$,  with $ V(K_n) = \{x_1, \dots, x_n\}$ and 
 let $\varsigma : E(K_n) \cup V(K_n) \rightarrow \{c_1,\dots, c_{h(G)}\}$ be a harmonious coloring.

By Theorem \ref{teo10}, $n\leq h(G)$. For each $x_i\in V(K_n)$, let $c_i = \varsigma(x_i)$. For each $i\in \{1,\dots,h(G)\}$,  let $e_i = \{e\in E(K_n) : \varsigma (e)= c_i\}$.  Observe that each $e_i$ induces a matching in $K_n$, $\sum\limits_{i=1}^{h(G)} |e_i| =\binom{n}{2}$ and since $\varsigma$ is a harmonious coloring, for every $x_i\in V(K_n)$, $x_i$  is not incident to an edge of $e_i$. 

Let $G^*$ be the spanning subgraph of $K_n$ such that for each $xy \in E(G^*)$, $\varsigma (xy) \in \{c_1, \dots, c_n\}$.  On the one hand, $e(G^*) = \sum\limits_{i=1}^{n} |e_i| = \frac{1}{2}\sum\limits_{i=1}^{n} \deg_{G^*} (x_i)$. On the other hand, for each  $x_i \in V(G^*)$, if there is $x_ix_j  \in E(G^*)$ such that $\varsigma(x_ix_j) = c_k$, then $x_k$ is not incident to an edge of $e_i$. Thus, $|e_i| \leq \frac{n-1-\deg_{G^*}(x_i)}{2}$.

Therefore $\frac{1}{2}\sum\limits_{i=1}^{n} \deg_{G^*} (x_i) =\sum\limits_{i=1}^{n} |e_i|  \leq \sum\limits_{i=1}^{n} \frac{n-1-\deg_{G^*}(x_i)}{2}$ which implies that $\sum\limits_{i=1}^{n} \deg_{G^*} (x_i) \leq \binom{n}{2}$. Thus, $e(G^*) \leq \binom{n}{2}/2$ and therefore $\sum\limits_{j=n+1}^{h(G)} |e_j| \geq  \frac{\binom{n}{2}}{2}$. Since, for each $i\in \{1,\dots,h(G)\}$, $|e_i|\leq \lfloor\frac{n}{2}\rfloor$ it follows that $(h(G)-n)\lfloor\frac{n}{2}\rfloor \geq \binom{n}{2}/2$. Finally we have \[h(G)\geq\begin{cases}
\begin{array}{c}
\frac{3}{2}n-\frac{1}{2}\\
\frac{3}{2}n
\end{array} & \begin{array}{c}
\textrm{if }n\textrm{ is even}\\
\textrm{otherwise}
\end{array}\end{cases}\] and from here the result follows.

For the upper bound suppose that $n=3m -l \geq 3$, where $m,l$ are integers and $0\leq l \leq 2$. Let $l_0,l_1,l_2$ be integers with $0=l_0 \leq l_1 \leq l_2 \leq 1$ such that $l = l_0 + l_1 + l_2$. Consider a partition $\{V_0, V_1, V_2\}$ of $V (K_n)$ with $|V_i| = m - l_i$, $i\in\{0,1,2\}$, and a partition $\{C_0,C_1,C_2\}$ of the set $\{1,\dots 3m\}$ with $C_0 = \{1,\dots ,m\}$, $C_1 = \{m + 1,\dots ,2m\}$ and $C_2 = \{2m + 1,\dots ,3m\}$. Since $l_i \in \{0, 1\}$, we have $\chi '(K_n[V_i]) \leq \chi (K_n[V_i]) = m - l_i \leq m$, and there is a proper edge coloring $\varsigma_i'\colon E(K_n[V_i]) \rightarrow C_{i+1\mod 3}$, $i\in\{0, 1, 2\}$. The graph $G' = \overline{K_n[V_0] \cup K_n[V_1] \cup K_n[V_2]} \cong K_{m-l_0,m-l_1,m-l_2}$ satisfies $\Delta (G') = 2m - l_1$, hence, by Vizing's Theorem, there is a proper edge coloring $\varsigma '\colon E(G') \rightarrow \{3m + 1,\dots ,5m - l_1 + 1\}$. Now define a coloring $\varsigma \colon V(G_n) = V(K_n) \cup E(K_n) \rightarrow \{1, \dots , 5m - l_1 + 1\}$ as follows:

\begin{enumerate}
\item The restriction of $\varsigma$ to $V_i$ is an injection to $C_i$, $i\in\{0, 1, 2\}$.

\item The restriction of $\varsigma$ to $E(K_n[V_i])$ is $\varsigma_i'$, $i\in\{0, 1, 2\}$.

\item The restriction of $\varsigma$ to $E(G')$ is $\varsigma'$.
\end{enumerate}

It is not hard to see that $\varsigma$ is a harmonious coloring of $G$, hence $h(G) \leq 5m-l_1 +1$. From $n = 3m-l$ we obtain $m = \frac{n+l}{3}$, and then $h(G) \leq \frac{5n+5l-3l_1+3}{3} = \frac{5n+2l_1+5l_2+3}{3} \leq \frac{5n+10}{3}$.
\end{proof}

Next, we recall that a set $U \subset V(G)$ is a \emph{packing set} of $G$ if every pair of distinct vertices $u,v\in  U$ satisfies $d_G(u,v) \geq 3$. The \emph{packing number} $\rho (G)$ of $G$ is the maximum cardinality of a packing set of $G$.

\begin{theorem}
If $G$ is a graph of order $n$, then \[\frac{n}{\rho(G)}\leq h(G) \leq n-\rho (G)+1.\]
\end{theorem}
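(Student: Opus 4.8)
The plan is to prove the two inequalities separately, exploiting a single observation in two directions: the color classes of a harmonious coloring are exactly the kind of sets counted by $\rho(G)$, and, conversely, a maximum packing set can serve as one class of a harmonious coloring.

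For the upper bound $h(G)\le n-\rho(G)+1$, I would fix a packing set $U$ of maximum cardinality $\rho(G)$ and write down an explicit harmonious coloring: give color $1$ to every vertex of $U$, and give a distinct private color to each of the remaining $n-\rho(G)$ vertices, for a total of $n-\rho(G)+1$ colors. Properness is immediate, since $U$ is independent (its vertices are pairwise at distance at least $3>1$) and the other classes are singletons. For the harmonious condition, the only color pair that could possibly occur on two edges is $\{1,c\}$ where $c$ is the private color of some vertex $x$; two such edges would be $u_1x$ and $u_2x$ with $u_1,u_2\in U$ distinct, forcing $d(u_1,u_2)\le 2$ and contradicting that $U$ is a packing set. (Alternatively, one can present this via a sequence of elementary harmonious homomorphisms that identify the vertices of $U$ one at a time and then invoke Remark \ref{teo2}; here one must check that, after merging $u_1,\dots,u_j$ into a single vertex $w$, one has $d(w,u_{j+1})=\min_i d_G(u_i,u_{j+1})\ge 3$, so each merge is legitimate.)

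For the lower bound $\tfrac{n}{\rho(G)}\le h(G)$, equivalently $n\le \rho(G)\,h(G)$, I would take an arbitrary harmonious $h(G)$-coloring and examine its $h(G)$ color classes, whose cardinalities sum to $n$. The key claim is that every color class is a packing set. Indeed, a class is independent by properness, so no two of its vertices are adjacent; and if two vertices $u,v$ of the same class satisfied $d(u,v)=2$, a common neighbor $w$ would yield two distinct edges $uw$ and $vw$ carrying the same pair of colors, contradicting the harmonious property. Hence any two vertices of a class are at distance at least $3$, so each class has at most $\rho(G)$ vertices, and summing over all $h(G)$ classes gives $n\le \rho(G)\,h(G)$.

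There is no real obstacle in this argument; the only point that needs a little care is the distance-$2$ step in the lower bound, and — if one prefers the homomorphism formulation of the upper bound — the verification that identifying a packing set is a valid chain of elementary harmonious homomorphisms, i.e. that no distance drops below $3$ along the way. As a sanity check, when $\rho(G)=1$ (equivalently $G$ has diameter at most $2$) both bounds collapse to $h(G)=n$, recovering the earlier observation.
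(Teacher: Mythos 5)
Your proposal is correct and follows essentially the same argument as the paper: bound each color class of a harmonious coloring by $\rho(G)$ for the lower bound, and color a maximum packing set with a single color plus private colors for the rest to get the upper bound. You spell out more explicitly than the paper why each color class is a packing set and why the constructed coloring is harmonious, but the approach is the same.
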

\begin{proof}
Suppose that $h(G) = k$ and let there be given a harmonious coloring of $G$ with $k$ colors and color classes $V_1, V_2, \dots , V_k$. Since $n=\sum_{i=1}^k|V_{i}|\leq k\rho(G)$, the lower bound follows.

Now,  let $U$ be a maximum packing set of vertices of $G$ and assign the color $1$ to each vertex of $U$. Assigning distinct colors different from 1 to each vertex of $V(G)\setminus U$ produces a harmonious coloring of $G$. Hence
$h(G) \leq |V(G)\setminus U | + 1 = n -\rho (G) + 1.$
\end{proof}

Observe that a harmonious homomorphism $f$ of a graph $G$ such that $f$ identify a set of vertices in a maximum packing set produces a graph $f(G)$ of diameter at most 4.

%%%%%%%%%%%%%%%%%%%%%%%%%%%%%%%%%%%%%%%%%%%%%%%%%%%%%%%%%%%%%%%%%%

\section{On circulant graphs}\label{section4}

Let $J \subseteq \{1, \ldots, \lfloor\frac{n}{2}\rfloor\}\subseteq \mathbb{Z}_n$, where $n \geq 3$. The \emph{circulant graph} $C_n(J)$ is defined by $V(C_n(J)) = \mathbb{Z}_n$ and $E(C_n(J)) = \{ij \colon |\{i-j,j-i\}\cap J| = 1\}$; $J$ is called the set of jumps of $C_n(J)$.  Complete graphs and cycles are examples of circulant graphs. Observe that the number of edges in $C_n(J)$ is either $n|J|-\frac{n}{2}$ if $n$ is even and $\frac{n}{2}\in J$, or $n|J|$ otherwise.

In \cite{MR3729832}, D{\k e}bsky et al.  proved that for a fixed set $J$, $h(C_n(J))\leq \sqrt{2n|J|}+O(n^{0.25}(\log n)^{0.5}).$ In other words,  if the circulant graph has small regularity, then its harmonious chromatic number is asymptotically close to the lower bound of Equation \ref{eq1}. 

In Theorem \ref{teo13} we present a family of circulant graphs with two jumps and a harmonious chromatic number equal to the lower bound of Equation \ref{eq1}. In Theorem \ref{teo16}, we give a family of circulant graphs of order $n$ with regularity $\Omega(\sqrt{n})$ and a harmonious chromatic number equal to $n$, where $n$ follows the asymptotic distribution of the prime numbers.

Given a graph $G$, the graph $G^{\bowtie}$ is defined as follows, see \cite{MR3946682}. 

If $V (G) = \{w_1, w_2,\dots, w_n\}$, then $V(G^{\bowtie}) = \{u_1,u_2,\dots ,u_n,v_1,v_2,\dots ,v_n\}$, and vertices $x_i,y_j$, where $x,y \in \{u,v\}$ and $i,j \in \{1,2,\dots ,n\}$, are adjacent in $G^{\bowtie}$ if and only if $w_iw_j \in E (G)$. For instance, if $w_1w_2$ is an edge of a graph $G$, the graph $G^{\bowtie}$ has the edges $u_1u_2$, $v_1v_2$, $u_1v_2$ and $v_1u_2$, see Figure \ref{Fig1}.
\begin{figure}[htbp]
\begin{center}	
\includegraphics[scale=0.9]{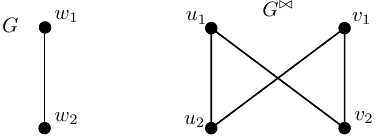}
\caption{\label{Fig1} An edge of $G$ produces four edges in $G^{\bowtie}$.}
\end{center}
\end{figure}

\begin{theorem}\label{teo13}
The circulant graph $C_{t(t-1)}(1,\frac{t(t-1)}{2}-1)$ for $t\equiv -1 ( \mod 4)$ has the harmonious chromatic number equal to the lower bound of Equation \ref{eq1}.
\end{theorem}
\begin{proof}
Since $t$ is odd, the complete graph $K_t$ is Eulerian. Suppose that $V (K_t) = \{x_1, x_2, \dots , x_t\}$. The Eulerian circuit of $K_t$ presented as a cycle $C_{\frac{t(t-1)}{2}}$ with vertex set $\{w_0, w_1, \dots , w_{\frac{t(t-1)}{2}} \}$, in which each vertex of $K_t$ appears $t-1$ times, can be unfolded as $C^{\bowtie}_{\frac{t(t-1)}{2}}$. The situation is illustrated in Figure \ref{Fig2} (left), where $u$-vertices ($v$-vertices) form the interior (exterior) cycle $C_{int} (C_{ext} )$ of length $\frac{t(t-1)}{2}$ of the graph $C^{\bowtie}_{\frac{t(t-1)}{2}}$, and $C_{int}$ corresponds to the Eulerian circuit of $K_t$. From $t\equiv -1 (\mod 4)$ it follows that $\frac{t(t-1)}{2}$ is odd. Therefore, the edges joining $C_{int}$ and $C_{ext}$ form the cycle of length $t(t-1)$ with vertices $z_0,z_1,\dots ,z_{t(t-1)}$ as in Figure \ref{Fig2} (right). Note that $z$-vertices with even (odd) subscripts are those of $C_{int} (C_{ext} )$, and the graph $C^{\bowtie}_{\frac{t(t-1)}{2}}$ is isomorphic to the circulant graph $G = C_{t(t-1)}(1, \frac{t(t-1)}{2} - 1)$. 
\begin{figure}[htbp]
\begin{center}	
\includegraphics[scale=0.9]{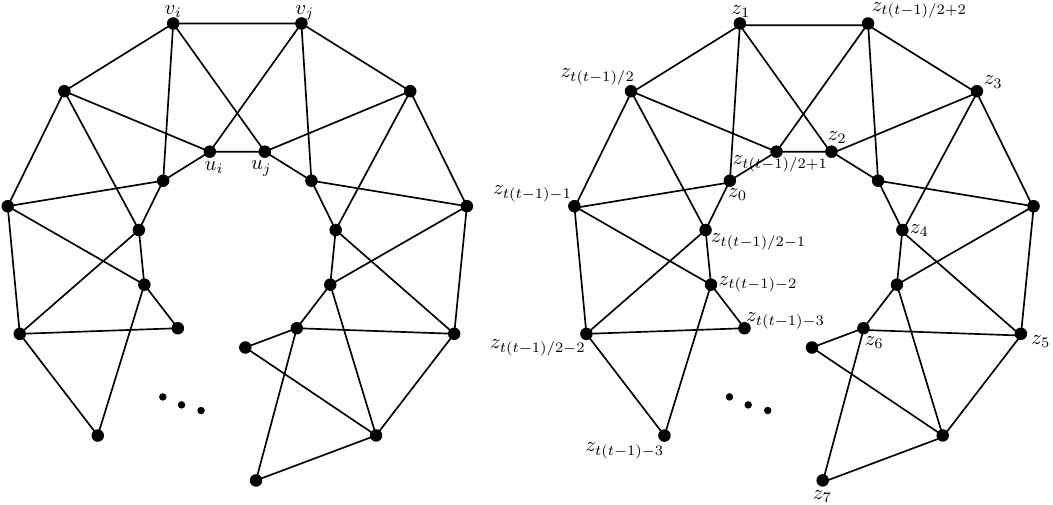}
\caption{\label{Fig2} The graph $C_\frac{t(t-1)}{2}^{\bowtie}$.}
\end{center}
\end{figure}

The graph $G$ is colored with colors $x_1,x_2,\dots ,x_t,y_1,y_2,\dots ,y_t$, where a color $y_l, l \in\{ 1, 2, \dots , t\}$, corresponds to the color $x_l$ in the projection of $C_{int}$ from the center of $C_{int}$ onto $C_{ext}$. Clearly, each edge joining colors $x_i$ and $x_j$, $i\not = j$, gives rise to edges joining three other pairs of colors, namely $x_i$ and $y_j$, $y_i$ and $x_j$, $y_i$ and $y_j$. The coloring of $C_{int}$ is harmonious, hence the total number of distinct pairs of color classes joined by an edge is $4\frac{t(t-1)}{2} = 2t(t-1) = |E(G)|$. This means that the coloring of $G$ is harmonious, and so $h(G) \leq 2t$. On the other hand,
\[\frac{1}{2}+ \sqrt{\frac{1}{4}+2(2t(t-1))} = \frac{1}{2}+ \sqrt{\frac{1}{4}+2|E(G)|} \leq h(G),\]

since $2t-1<\frac{1}{2}+ \sqrt{4t^2-4t+\frac{1}{4}}$ (a consequence of $t \geq 3)$, and the result follows.
\end{proof}

\begin{theorem}
If a graph $G$ has size $m\geq 1$ and its harmonious number $h$ equals the lower bound $\frac{1}{2}+ \sqrt{\frac{1}{4}+2m}$, then $G^{\bowtie}$ has the harmonious number $2h$. 
\end{theorem}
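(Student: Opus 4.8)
The plan is to pin down the rigid structure that the hypothesis forces on the colorings of $G$, and then to establish $h(G^{\bowtie})\le 2h$ and $h(G^{\bowtie})\ge 2h$ separately; this will in particular subsume the computation $h(K_t^{\bowtie})=2t$ used in the proof of Theorem~\ref{teo14}. The first observation is that $h$ meeting the bound $\tfrac12+\sqrt{\tfrac14+2m}$ forces $m=\binom{h}{2}$, so that in \emph{every} harmonious $h$-coloring of $G$ each of the $\binom{h}{2}$ pairs of colors occurs on \emph{exactly} one edge. I would also record at the outset that $G^{\bowtie}$ has exactly $4m$ edges, since each edge $w_iw_j$ of $G$ contributes the four distinct edges $u_iu_j,\,v_iv_j,\,u_iv_j,\,v_iu_j$, and distinct edges of $G$ contribute edge sets supported on distinct index pairs. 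I will assume $m\ge 1$, i.e.\ $h\ge 2$ (the edgeless case being degenerate, since there $h=1$ but $G^{\bowtie}$ is also edgeless).

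For the lower bound I would appeal only to the elementary size inequality $|E(H)|\le\binom{h(H)}{2}$ applied to $H=G^{\bowtie}$: since $|E(G^{\bowtie})|=4m=2h(h-1)$ while $\binom{2h-1}{2}=(2h-1)(h-1)$, and $2h(h-1)-(2h-1)(h-1)=h-1>0$ for $h\ge 2$, no harmonious coloring of $G^{\bowtie}$ can use as few as $2h-1$ colors; hence $h(G^{\bowtie})\ge 2h$.

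For the upper bound I would produce an explicit harmonious $2h$-coloring. Fix a harmonious $h$-coloring $c\colon V(G)\to\{1,\dots,h\}$, introduce a disjoint second palette $\{1',\dots,h'\}$, and set $u_i\mapsto c(w_i)$ and $v_i\mapsto c(w_i)'$. Properness is quick: on the edges $u_iu_j$ and $v_iv_j$ it comes from properness of $c$, and the edges $u_iv_j,\,v_iu_j$ always join an unprimed to a primed color. For harmoniousness I would go through the three kinds of color pairs. A pair of unprimed colors $\{a,b\}$ can be realized only on an edge $u_iu_j$ with $\{c(w_i),c(w_j)\}=\{a,b\}$, of which $G$ has at most one; the all-primed case is symmetric. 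A mixed pair $\{a,b'\}$ can be realized only on an edge $u_iv_j$ with $c(w_i)=a$ and $c(w_j)=b$, and $G$ again has at most one such edge; and when $a=b$ there is none, because $c$ is proper. Hence every pair of colors lies on at most one edge, so $h(G^{\bowtie})\le 2h$, and combining the two inequalities gives $h(G^{\bowtie})=2h$.

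The step I expect to need the most care is the verification for the mixed pairs: one must observe that the two edges $u_iv_j$ and $v_iu_j$ arising from a single edge $w_iw_j$ of $G$ carry the two \emph{different} pairs $\{c(w_i),c(w_j)'\}$ and $\{c(w_i)',c(w_j)\}$, so that the doubling does not spoil the ``one edge per color pair'' property inherited from $c$, and that pairs of the shape $\{a,a'\}$ are vacuously safe. Everything else is routine bookkeeping.
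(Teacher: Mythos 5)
Your proposal is correct and follows essentially the same route as the paper: the lower bound is the size bound $|E(G^{\bowtie})|\le\binom{h(G^{\bowtie})}{2}$ (which is exactly the paper's comparison of $2h-1$ with $\frac{1}{2}+\sqrt{\frac{1}{4}+2(4m)}$), and the upper bound is the ``natural'' doubled coloring that the paper invokes without detail, which you verify explicitly (including the key check on mixed pairs $\{a,b'\}$). Your explicit treatment of the degenerate case $m=0$ and of the mixed-pair verification simply fills in details the paper leaves implicit.
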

\begin{proof}
From $h(G) = \frac{1}{2}+ \sqrt{\frac{1}{4}+2m}$ we obtain $m=\binom{h}{2}$. Suppose that $V(G) = \{w_1,w_2,\dots ,w_n\}$, and let $\varsigma \colon V(G) \rightarrow \{1,2,\dots ,h\}$ be a harmonious coloring of $G$. Then it is straightforward to see that the coloring $\varsigma'\colon  V(G^{\bowtie}) \rightarrow \{1,2,\dots ,2h\}$ defined by $\varsigma'(u_i) = \varsigma (w_i)$ and $\varsigma'(v_i) = h+\varsigma (w_i)$, $i\in\{1, 2, \dots , n\}$, is harmonious, too. Therefore, $h(G^{\bowtie}) \leq 2h$.

The size of the graph $G^{\bowtie}$ is $M =4m=2h(h-1)$. From $m\geq 1$ it follows that $h>1$, 
$2h-1<\frac{1}{2}+ \sqrt{\frac{1}{4}+4h(h-1)}\leq h(G^{\bowtie})$ and $h(G^{\bowtie})\geq 2h$.
\end{proof}

A natural question is the following: Which are the circulant graphs of diameter two? The obvious answer is that they are those such that any number from 1 to $n$ can be represented as the sum of two `jumps', so we suppose that the question is rather when that can be done.

Graphs of order $n$ for which $\deg(u)+\deg(v)\geq n-1$ for every two nonadjacent vertices $u,v$ have diameter at most 2,  thus circulant graphs which are $r$-regular of order $n$ with $r\geq (n-1)/2$ have harmonious number $n$. We give a construction of circulant graphs with harmonious number $n$, such that each vertex has degree $\Theta(\sqrt{n})$. 

A subset $D=\{d_1,\dots,d_k\}$ of (the underlying set of) an additive group $\Gamma$, of order $n$, is an \emph{$(n,k)$-difference set} if for each nonzero element $g\in \Gamma$, there is a unique pair of different elements $d_i,d_j\in D$ such that $g=d_i-d_j$.  Therefore, if $D\subset \Gamma$ is an $(n,k)$-difference set and $g\in \Gamma$, then $D+g:=\{d_1+g,d_2+g,\dots,d_k+g\}$ is an $(n,k)$-difference set. 

The parameters $n$ and $k$ of an $(n,k)$-difference set satisfy the necessary conditions $n-1=k(k-1)$. It is conjectured that $q=k-1$ must be a power of a prime number, and the conjecture is verified for $q\leq 2,000,000$. Then $n=q^2+q+1$ and the construction of $(q^2+q+1,q+1)$-difference sets has an algebraic approach via the Galois field of order $q$.  See Table \ref{tab1} for some examples of difference sets. See \cite{JPS2007DS} for further reading.

\begin{table}[!htbp]
\begin{center}
\begin{tabular}{|c|c|c|c|}
\hline
$\mathbb{Z}_n$ & Difference set & $\mathbb{Z}_n$ & Difference set \\
\hline
$\mathbb{Z}_{7}$ & 1,2,4 & $\mathbb{Z}_{57}$ & 1,2,4,14,33,37,44,53 \\
\hline
$\mathbb{Z}_{13}$ & 1,2,5,7 & $\mathbb{Z}_{73}$ & 1,2,4,8,16,32,37,55,64 \\
\hline
$\mathbb{Z}_{21}$ & 1,4,5,10,12 & $\mathbb{Z}_{91}$ & 1,2,4,10,28,50,57,62,78,82 \\
\hline
$\mathbb{Z}_{31}$ & 1,2,4,9,13,19 & $\mathbb{Z}_{133}$ & 1,2,4,13,21,35,39,82,89,95,105,110 \\
\hline
\end{tabular}
\caption{\label{tab1}Some difference sets for $\mathbb{Z}_n$ with small $n$.}
\end{center}
\end{table}

Let $D\subseteq \mathbb{Z}_n$ be a difference set such that $0\notin D$, and let \[J(D) = \{i \colon i \in D, i \leq n/2\} \cup \{n-i\colon i \in D, i > n/2\}.\]

Figure \ref{Fig3} shows the circulant graph $C_{31}(J)$ of diameter two with $J =\{1,2,4,9, 12,13\}$, where $J = J(D)$ and $D = \{1,2,4,9,13,19\}$.
\begin{figure}[htbp]
\begin{center}	
\includegraphics{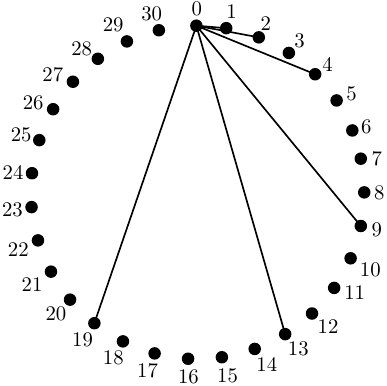}
\caption{\label{Fig3} The graph $C_{31}(1,2,4,9,12,13)$, where only the edges $0i$ for $i\in \{1,2,4,9,13,19\}$ are shown.}
\end{center}
\end{figure}

\begin{theorem}\label{teo16}
Let $q$ be a prime power, $n=q^2+q+1$, and let $D\subseteq \mathbb{Z}_n$ be a  difference set such that $0\notin D$ and $|D|=q+1$. Then $h(C_n(J(D))=n$.
\end{theorem}
\begin{proof}
The graph $G = C_n(J(D))$ is $r$-regular with $r = 2(q + 1) = 2\lceil\sqrt{n}\rceil = \Theta (\sqrt{n})$, and its diameter is two. To see it consider distinct vertices $u,v$ of $G$. There exist $i,j \in D$ such that $i-j = u-v = l$. Observe that $|J(D)\cap \{i, n-i\}| = |J(D)\cap \{j, n-j\}| = 1$. The vertex $w = u+(n-i) = u-i = u-j-l$ is adjacent in $G$ to both $u$ and $v$, since $v = u-l = w+j = w-(n-j)$. Thus, the distance in $G$ between $u$ and $v$ is at most two, and we are done. 
\end{proof}

\section{Conclusions}

Harmonious coloring was first proposed in 1982 \cite{MR683991}, but still very little is known about it.  For future work, we propose to study either the harmonious achromatic number defined in Section \ref{section2} or the greedy coloring via harmonious homomorphisms. We define the \emph{harmonious Grundy number} of a graph $G$ as the number of colors in the worst case using such greedy colorings from $G$ to a graph of diameter two.

Theorem \ref{teo9} estimates the harmonious chromatic number of the $(q+1,6)$-cages, hence another possible problem is to estimate the harmonious chromatic number of the $(q+1,8)$-cages, i.e., graphs that are the incidence graph of generalized quadrangles.

\begin{figure}[htbp]
\begin{center}	
\includegraphics{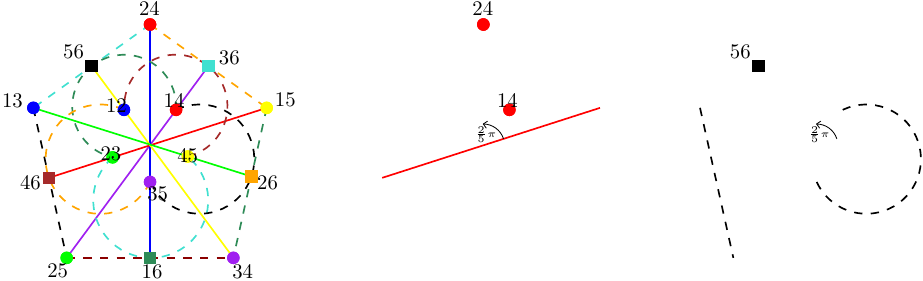}
\caption{\label{Fig4} (Left) The harmonious chromatic number of the incidence graph of $GQ(2,2)$ is 10. (Center) A color class with two points and a line produces other color classes via rotations of $\frac{2}{5}\pi$.  (Right) A color class with two lines and a point produces other color classes via rotations of $\frac{2}{5}\pi$.}
\end{center}
\end{figure}

The smallest non-trivial generalized quadrangle is $GQ(2,2)$, whose representation can be obtained through a 1-factorization of $K_6$ as follows, see \cite{web}. If $V(K_6)=\{1,\dots,6\}$, the points of $GQ(2,2)$ are the edges of $K_6$ and each line has three points arising from a matching of $K_6$.  In Figure \ref{Fig4} (Left), $GQ(2,2)$ is shown with a harmonious coloring of 10 colors where each color class has three elements: There are two types of color classes, the first one contains two points and a line (as in Figure \ref{Fig4} (Center)), while the second one contains two lines and a point (as in Figure \ref{Fig4} (Right)). The mentioned two color classes give rise to the remaining eight color classes by rotating counterclockwise around the center of the pentagon with vertices $13$, $24$, $15$, $34$, $25$ through the angle $\frac{2}{5}\pi j$, $j\in\{1,2,3,4\}$. Therefore, since the $(3,8)$-cage graph $G$ is the incidence graph of $GQ(2,2)$ and it has $45$ edges, it requires at least 10 colors in any harmonious coloring, therefore $h(G)=10$. As consequence, the achromatic number of $G$ is also 10.

%%%%%%%%%%%%%%%%%%%%%%%%%%%%%%%%%%%%%%%%%%%%%%%%%%%%%%%%%%%%%%%%%%

\section{Statements and Declarations}

This work was partially supported by PAPIIT-M{\' e}xico under Project IN108121; CONACyT-M{\' e}xico under projects 282280,  A1-S-12891, 47510664; and PAIDI-M{\' e}xico under Project PAIDI/007/21.

The authors have no relevant financial or non-financial interests to disclose.

All authors contributed to the study conception and design. The first draft of the manuscript was written by Gabriela Araujo-Pardo, Juan Jos{\' e} Montellano-Ballesteros, Christian Rubio-Montiel, and Mika Olsen, and all authors commented on previous versions of the manuscript. All authors read and approved the final manuscript.

The authors wish to thank the anonymous referees for their suggestions and remarks to improve this paper.

%---------------------- Bibliography ---------------------

\bibliographystyle{plain}
\bibliography{biblio}

\end{document}